
\documentclass[12pt]{amsart}
\usepackage{amsfonts}
\usepackage{amssymb}
\usepackage{enumerate}
\usepackage{graphicx}

\setcounter{MaxMatrixCols}{10}

\makeatletter
\@namedef{subjclassname@2010}{  \textup{2010} Mathematics Subject Classification}
\makeatother
\newtheorem{theorem}{Theorem}[]
\newtheorem{lemma}[]{Lemma}

\newtheorem{corollary}[]{Corollary}

\theoremstyle{definition}

\newtheorem{remark}[]{Remark}

\numberwithin{equation}{section}
\frenchspacing
\textheight=23cm \textwidth=13.5cm \topmargin0cm \oddsidemargin1cm \evensidemargin1cm
\footskip1cm

\begin{document}
\title[Monotone nonexpansive mappings]{The Knaster-Tarski theorem \\
versus \\
monotone nonexpansive mappings}
\author[R. Esp\'{\i}nola]{Rafael Esp\'{\i}nola}
\address{Departamento de An\'{a}lisis Matem\'{a}tico - IMUS, Universidad de
Sevilla, Apdo. de Correos 1160, 41080 Sevilla, Spain}
\email{espinola@us.es}
\author[A. Wi\'{s}nicki]{Andrzej Wi\'{s}nicki}
\address{Department of Mathematics, Rzesz\'{o}w University of Technology,
Al. Powsta\'{n}c\'{o}w Warszawy 12, 35-959 Rzesz\'{o}w, Poland}
\email{awisnicki@prz.edu.pl}
\date{}

\begin{abstract}
Let $X$ be a partially ordered set with the property that each family of
order intervals of the form $[a,b],[a,\rightarrow )$ with the finite
intersection property has a nonempty intersection. We show that every
directed subset of $X$ has a supremum. Then we apply the above result to
prove that if $X$ is a topological space with a partial order $\preceq $ for
which the order intervals are compact, $\mathcal{F}$ a nonempty commutative
family of monotone maps from $X$ into $X$ and there exists $c\in X$ such
that $c\preceq Tc$ for every $T\in \mathcal{F}$, then the set of common
fixed points of $\mathcal{F}$ is nonempty and has a maximal element. The
result, specialized to the case of Banach spaces gives a general fixed point
theorem that drops almost all assumptions from the recent results in this
area. An application to the theory of integral equations of Urysohn's type
is also given.
\end{abstract}

\subjclass[2010]{Primary 06A45, 54F05; Secondary: 34A12, 46B20}
\keywords{monotone mapping; nonexpansive mapping; fixed point; partially
ordered set; directed set; Banach space; Urysohn-type equation}
\maketitle





\section{Introduction}

M. R. Alfuraidan and M. A. Khamsi asked in \cite{AlKh} whether the classical
fixed point theorems for nonexpansive mappings still hold for
monotone-nonexpansive mappings. An interplay between the order and metrical
structure of the space turned out to be very fruitful in the years that
followed, including the Bishop-Phelps technique \cite{Ph}, counterparts of
Banach's contraction principle (see, e.g., \cite{Tu, RaRe, NiRo, Ja}) and
numerous applications, from linear and nonlinear matrix equations,
differential \ and integral equations, to game theory. For a recent account
of the theory we refer the reader to \cite{CaHe}.

Let $(M,\preceq )$ be a set with a partial order. A mapping $T:M\rightarrow
M $ is said to be monotone (or increasing) if $T(x)\preceq T(y) $ whenever $%
x\preceq y$. If in addition $(M,d)$ is a metric space then $T$ is said to be
monotone nonexpansive if
\begin{equation*}
d(T(x),T(y))\leq d(x,y)
\end{equation*}%
for every comparable $x,y\in M$ (i.e., $x\preceq y$ or $y\preceq x$). Fixed
point theory for nonexpansive mappings is broad and it is natural to study
its counterpart for monotone nonexpansive maps. It was initiated quite
recently in \cite{BaKh1}. In \cite{BiKh}, an analogue of the Browder-G\"{o}%
hde fixed point theorem in uniformly convex spaces was obtained. The next
step might be to attack a counterpart of classical Kirk's theorem in Banach
spaces with weak normal structure, and then to look for some counterexamples
of Alspach's type (for a background of metric fixed point theory, \cite{GoKi}
is recommended). Then we could examine a vast number of more or less
interesting generalizations of nonexpansive mappings from the
order-theoretical point of view that have been introduced for over fifty
years now (metric fixed point theory seems to be a real phenomenon here).
Such works have recently started to appear.

The aim of this note is to show that fixed point theory for monotone
nonexpansive-type mappings is not that rich. For its existential part, it
appears to be an application of the classical Knaster-Tarski theorem (also
known as the Abian-Brown theorem), see e.g., \cite[Theorem 2.1.1]{GrDu}, and
its generalization to commutative family of monotone mappings. We show in
Section 2 a general fixed point theorem with an independent proof that drops
almost all assumptions from the recent results in this area, see Corollary %
\ref{main3} and the comments after it. A counterpart of Kirk's theorem
follows as a very special case. In Section 3 we apply our results to the
theory of functional integral equations of Urysohn's type.

\section{Fixed point theorems}

The following lemma is crucial for our work. In this generality it is
probably new but some special cases follow for example from \cite[Prop. 2.32]%
{CaHe},\ combined with \cite[Prop. 1.1.7]{HeLa}. Let $X$ be a set with a
partial order $\preceq $. Consider the sets $[a,\rightarrow )=\{x\in
X:a\preceq x\}$ and $(\leftarrow ,b]=\{x\in X:x\preceq b\}$. Along this work
the concept of order intervals in $X$ will be restricted to the sets $%
[a,\rightarrow )$ and $[a,b]=[a,\rightarrow )\cap (\leftarrow ,b].$

Remember that a subset $J$ of a partially ordered set $X$ is directed if
each finite subset of $J$ has an upper bound in $J$.

\begin{lemma}
\label{compact}Let $X$ be a partially ordered set for which each family of
order intervals of the form $[a,b],[a,\rightarrow )$ with the finite
intersection property has a nonempty intersection. Then every directed
subset of $X$ has a supremum.
\end{lemma}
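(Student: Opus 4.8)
The plan is to first produce an upper bound for the given directed set and then, among all upper bounds, locate the least one, using the two permitted families of order intervals for the two tasks respectively. Fix a directed subset $J\subseteq X$; note that $J$ is nonempty, since the empty subset of $J$ must have an upper bound lying in $J$. Consider the family $\{[a,\rightarrow):a\in J\}$. Given finitely many $a_1,\dots,a_n\in J$, directedness supplies $a\in J$ with $a_i\preceq a$ for all $i$, so $a$ belongs to every $[a_i,\rightarrow)$; hence this family has the finite intersection property. By hypothesis $\bigcap_{a\in J}[a,\rightarrow)\neq\emptyset$, and this intersection is precisely the set $U$ of all upper bounds of $J$. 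So $U\neq\emptyset$.

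Next I would run the analogous argument with bounded intervals to pin down the supremum. Consider $\mathcal{F}=\{[a,b]:a\in J,\ b\in U\}$, a nonempty family of order intervals of the allowed form (each $[a,b]$ is itself nonempty, containing $b$, because $b$ dominates every element of $J$). For a finite subfamily $[a_1,b_1],\dots,[a_n,b_n]$, again choose $a\in J$ above $a_1,\dots,a_n$ by directedness; since every $b_i$ is an upper bound of $J$, we get $a_i\preceq a\preceq b_i$, hence $a\in\bigcap_{i=1}^{n}[a_i,b_i]$. Thus $\mathcal{F}$ has the finite intersection property, and the hypothesis yields some $m\in\bigcap_{a\in J,\,b\in U}[a,b]$.

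Finally I would read off that $m=\sup J$: from $a\preceq m$ for every $a\in J$ it follows that $m$ is an upper bound of $J$, i.e. $m\in U$; and from $m\preceq b$ for every $b\in U$ it follows that $m$ lies below every upper bound of $J$. Hence $m$ is the least upper bound of $J$, as required. There is no genuine obstacle here beyond verifying the finite intersection property in the two cases and being careful that $U\neq\emptyset$, so that the second family $\mathcal{F}$ is nonempty; this is exactly why the first step, invoking the half-lines $[a,\rightarrow)$, is needed alongside the bounded intervals $[a,b]$, and it explains why the hypothesis is stated for families mixing both interval types.
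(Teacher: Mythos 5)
Your proof is correct and follows essentially the same route as the paper's: first use the half-lines $[a,\rightarrow)$ with directedness to show the set of upper bounds is nonempty, then use the family $\{[a,b]:a\in J,\ b\in U\}$ (the paper's $\{[x,z]:x\in J,\ z\in K\}$) to extract an element that is simultaneously an upper bound of $J$ and a lower bound of all upper bounds. The only cosmetic difference is that the paper phrases the conclusion as the intersection $K_0$ being a singleton, while you directly identify the element $m$ as the least upper bound.
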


\begin{proof}
Let $J$ be a directed subset of $X$ and define
\begin{equation*}
K=\bigcap\limits_{x\in J}[x,\rightarrow ).
\end{equation*}%
For each $n$ and a sequence $\{x_{1},\ldots ,x_{n}\}\subseteq J$ an upper
bound in $J$ of the sequence is in $\bigcap_{i=1}^{n}[x_{i},\rightarrow )$
and so, from the finite intersection property, $K$ is nonempty. Take now $%
\{z_{1},\ldots ,z_{n}\}\subseteq K$ and consider $%
\bigcap_{i=1}^{n}[x_{i},z_{i}]$ which is again nonempty because the same
upper bound as above is still in this intersection. Therefore the family $%
\{[x,z]:x\in J,z\in K\}$ has the finite intersection property too and $%
K_{0}=\bigcap_{x\in J,z\in K}[x,z]$ is nonempty. Moreover, it is clear that $%
K_{0}\subseteq K$ and each element of $K_{0}$ is a lower bound of $K$. Hence
$K_{0}$ is a singleton and it is, in fact, the supremum of $J$.
\end{proof}

\begin{remark}
Notice that Lemma \ref{compact} holds for topological partially ordered spaces for
which order intervals are compact.
\end{remark}

If we combine Lemma \ref{compact} and the Knaster-Tarski theorem we obtain
immediately,

\begin{theorem}
\label{main1}Let $X$ be a topological space with a partial order $\preceq $
for which order intervals are compact and let $T:X\rightarrow X$ be
monotone. If there exists $c\in X$ such that $c\preceq T(c)$, then the set
of all fixed points of $T$ is nonempty and has a maximal element.
\end{theorem}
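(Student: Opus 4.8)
The plan is to combine Lemma~\ref{compact} with Zorn's lemma applied to the set $P=\{x\in X:x\preceq T(x)\}$; in effect this is the Knaster--Tarski argument powered by the chain-completeness that Lemma~\ref{compact} supplies. First I would invoke the Remark following Lemma~\ref{compact}: since the order intervals of $X$ are compact, a family of such intervals with the finite intersection property has nonempty intersection, so the hypothesis of Lemma~\ref{compact} is satisfied, and hence every directed subset of $X$ --- in particular every nonempty chain --- has a supremum in $X$.

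Next, observe that $P\neq\emptyset$ (it contains $c$) and $\operatorname{Fix}(T)\subseteq P$. I claim every chain $C\subseteq P$ has an upper bound in $P$. If $C=\emptyset$, any point of $P$ works; if $C\neq\emptyset$, set $s=\sup C\in X$, which exists by the previous step. For each $x\in C$ we have $x\preceq s$, hence $x\preceq T(x)\preceq T(s)$ by monotonicity, so $T(s)$ is an upper bound of $C$, whence $s\preceq T(s)$, i.e.\ $s\in P$; thus $s$ is an upper bound of $C$ lying in $P$. By Zorn's lemma $P$ has a maximal element $m$. Applying $T$ to $m\preceq T(m)$ gives $T(m)\preceq T(T(m))$, so $T(m)\in P$ and $T(m)\succeq m$; maximality forces $T(m)=m$, so $\operatorname{Fix}(T)\neq\emptyset$. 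Finally, $m$ is a maximal fixed point: if $p\in\operatorname{Fix}(T)$ with $p\succeq m$, then $p\in P$ and maximality of $m$ in $P$ gives $p=m$.

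The one point worth checking carefully is that, for a chain $C$ of fixed points, the supremum $s=\sup C$ need \emph{not} itself be a fixed point, since $T$ is not assumed continuous --- which is exactly why one works inside $P$ (equivalently, why one re-applies the Knaster--Tarski theorem to the $T$-invariant set $[s,\rightarrow)$, still chain-complete by Lemma~\ref{compact}, to extract a genuine fixed point above $C$) rather than hoping $\sup C$ is already fixed. Everything else is routine: verifying that the relevant sub-posets inherit the chain-completeness of $X$, and disposing of the empty chain via nonemptiness of $P$. Alternatively, one may quote the Knaster--Tarski (Abian--Brown) theorem on $[c,\rightarrow)$ for the nonemptiness of $\operatorname{Fix}(T)$ and then run Zorn's lemma directly on $\operatorname{Fix}(T)$, using the same computation $s\preceq T(s)$.
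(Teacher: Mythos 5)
Your proposal is correct and follows essentially the same route as the paper, which obtains Theorem~\ref{main1} by combining Lemma~\ref{compact} (every directed set, hence every chain, has a supremum) with the Knaster--Tarski/Abian--Brown theorem; you have simply unwound that citation into the standard Zorn's-lemma argument on $P=\{x\in X: x\preceq T(x)\}$, including the correct observations that $T(s)$ is an upper bound of a chain in $P$ and that $\operatorname{Fix}(T)\subseteq P$ yields maximality of the fixed point. No gaps.
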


But the application of Lemma \ref{compact} is wider. Having it we obtain a
short and independent proof of the following strengthening of Theorem \ref%
{main1}.

\begin{theorem}
\label{main2}Let $X$ be a topological space with a partial order $\preceq $
for which order intervals are compact and let $\mathcal{F}$ be a nonempty
commutative family of monotone maps from $X$ into $X$. If there exists $c\in
X$ such that $c\preceq Tc$ for every $T\in \mathcal{F}$, then the set of
common fixed points of $\mathcal{F}$ is nonempty and has a maximal element.
\end{theorem}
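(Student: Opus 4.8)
\textbf{Proof proposal.} The plan is to combine Theorem \ref{main1} with a Zorn-type argument. First I would apply Theorem \ref{main1} to a single $T\in\mathcal{F}$ (using the hypothesis $c\preceq Tc$) to see that each individual map has a fixed point; but the real work is to find a \emph{common} one. The key observation is that if $S,T\in\mathcal{F}$ commute and $p$ is a fixed point of $T$ with $p\succeq c$, then $Sp$ is again a fixed point of $T$, since $T(Sp)=S(Tp)=Sp$; and if moreover $Sp\succeq p$, the order interval $[p,\rightarrow)$ is $S$-invariant and $[p,\rightarrow)\cap\mathrm{Fix}(T)$ is nonempty, closed in the compact interval, hence compact. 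So the strategy is to build the common fixed point set by transfinite intersection of such intervals.

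\textbf{Main construction.} Let $\mathcal{P}$ be the set of all points $x\in X$ with $c\preceq x\preceq Tx$ for every $T\in\mathcal{F}$; it is nonempty since $c\in\mathcal{P}$. Order $\mathcal{P}$ by $\preceq$. I claim every chain (indeed every directed subset) $J$ in $\mathcal{P}$ has an upper bound in $\mathcal{P}$: by Lemma \ref{compact} (valid here by the Remark, since order intervals are compact) $J$ has a supremum $s=\sup J$. One checks $c\preceq s$, and for each $T\in\mathcal{F}$ and each $x\in J$ we have $x\preceq Tx\preceq Ts$ (monotonicity, $x\preceq s$), so $Ts$ is an upper bound of $J$, whence $s\preceq Ts$; thus $s\in\mathcal{P}$. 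By Zorn's lemma $\mathcal{P}$ has a maximal element $m$. It remains to show $Tm=m$ for every $T\in\mathcal{F}$. Fix $T$; since $m\preceq Tm$, monotonicity gives $Tm\preceq T(Tm)$, and for any other $S\in\mathcal{F}$, $S(Tm)=T(Sm)\succeq T m$ because $Sm\succeq m$; also $Tm\succeq m\succeq c$. Hence $Tm\in\mathcal{P}$ and $Tm\succeq m$, so maximality forces $Tm=m$. Therefore $m$ is a common fixed point.

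\textbf{Maximality among common fixed points.} Let $\mathcal{C}$ denote the (now nonempty) set of common fixed points of $\mathcal{F}$. Every common fixed point lying above $c$ belongs to $\mathcal{P}$, and conversely a maximal element of $\mathcal{P}$ is a common fixed point by the previous paragraph; moreover any common fixed point $q\succeq c$ is $\preceq$-comparable to nothing automatically, so I instead argue directly: apply Zorn's lemma to $\mathcal{C}\cap[c,\rightarrow)$, whose chains have suprema in $X$ by Lemma \ref{compact}, and the supremum $s$ of such a chain satisfies $Ts\succeq Tq=q$ for all $q$ in the chain and all $T$, hence $s\preceq Ts$; then running the argument of the previous paragraph on the commutative family shows the supremum can be pushed up to a fixed point, so a maximal element $m^{*}$ of $\mathcal{C}\cap[c,\rightarrow)$ exists. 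Finally $m^{*}$ is maximal in all of $\mathcal{C}$: if $q\in\mathcal{C}$ with $q\succeq m^{*}$ then $q\succeq m^{*}\succeq c$, so $q\in\mathcal{C}\cap[c,\rightarrow)$ and maximality gives $q=m^{*}$.

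\textbf{Expected obstacle.} The delicate point is verifying that the supremum produced by Lemma \ref{compact} stays inside the relevant invariant set and satisfies $s\preceq Ts$; this needs the interplay of directedness, monotonicity, and commutativity, and one must be careful that the supremum is taken in $X$ (not in a subposet) yet still lands in $\mathcal{P}$ or $\mathcal{C}\cap[c,\rightarrow)$. Once that is in hand, the two Zorn's-lemma applications and the "fixed point is comparable above" bookkeeping are routine.
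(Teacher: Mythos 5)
Your proof is correct, but it takes a genuinely different route from the paper's. The paper applies Zorn's lemma to the collection of directed subsets $J\subseteq X$ that contain the orbit $J_{0}=\{T_{1}\cdots T_{n}c: T_i\in\mathcal{F}\}\cup\{c\}$ and satisfy $x\preceq Tx$ and $Tx\in J$ for all $x\in J$, $T\in\mathcal{F}$, ordered by inclusion; the supremum $s=\sup J$ of a maximal such $J$ (supplied by Lemma \ref{compact}) is then shown to be a common fixed point, and its maximality among fixed points is immediate because any common fixed point above $s$ could be adjoined to $J$. You instead apply Zorn's lemma directly to the poset $\mathcal{P}=\{x\in X:c\preceq x\preceq Tx\ \text{for all }T\in\mathcal{F}\}$ ordered by $\preceq$, using Lemma \ref{compact} only to bound chains, and you use commutativity exactly once ($S(Tm)=T(Sm)\succeq Tm$) to check that $Tm\in\mathcal{P}$, so that a maximal $m\in\mathcal{P}$ must be a common fixed point. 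This is a clean Abian--Brown-style argument and is, if anything, more elementary: it avoids introducing $J_{0}$ and verifying that unions of chains of invariant directed sets remain directed, while the paper's scheme buys the maximality claim essentially for free. Your last paragraph is, however, more complicated than necessary: any common fixed point $q$ with $q\succeq m$ satisfies $q\succeq c$ and $q\preceq Tq$ trivially, hence $q\in\mathcal{P}$, and maximality of $m$ in $\mathcal{P}$ already forces $q=m$; the detour through Zorn's lemma on the set $\mathcal{C}\cap[c,\rightarrow)$, where $\mathcal{C}$ is the set of common fixed points (whose chains you bound by re-running the main construction above their suprema), does go through but is not needed. Both proofs rely in the same way on the Remark that compactness of order intervals yields the finite-intersection hypothesis of Lemma \ref{compact}.
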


\begin{proof}
Let%
\begin{equation*}
J_{0}=\{x\in X:x=T_{1}T_{2}...T_{n}c:T_{1},...,T_{n}\in \mathcal{F},n\in
\mathbb{N}\}\cup \{c\}.
\end{equation*}%
It is not difficult to see that $J_{0}$ is a directed set, $x\preceq Tx$ and
$Tx\in J_{0}$ for each $x\in J_{0}$ and $T\in \mathcal{F}$. Furthermore, if $%
\mathcal{J}$ is a chain of directed subsets of $X$ containing $J_{0}$ with
the above properties, then $\bigcup \mathcal{J}$ is a directed set with
these properties too, i.e., $x\preceq Tx$ and $Tx\in \bigcup \mathcal{J}$
for each $x\in \bigcup \mathcal{J}$ and $T\in \mathcal{F}$. By
Kuratowski-Zorn's lemma there exists a maximal directed set $J\subseteq X$
which contains $J_{0}$ with these properties and let $s=\sup J$ whose
existence follows from Lemma \ref{compact}. Then $x\preceq s$ and hence $%
x\preceq Tx\preceq Ts$ for every $x\in J$ and $T\in \mathcal{F}$. Thus each $%
Ts$ is an upper bound for $J$ and consequently $s\preceq Ts$ for $T\in
\mathcal{F}$. It follows from the maximality of $J$ that $s=Ts\in J$ for
each $T\in \mathcal{F}$ and clearly $s$ is a maximal fixed point of $%
\mathcal{F}$.
\end{proof}

\begin{remark}
Theorem \ref{main2} can be also deduced by combining Lemma \ref{compact}
with DeMarr's theorem \cite[Theorem 1]{De}.
\end{remark}

We are thus led to the following corollary.

\begin{corollary}
\label{main3}Let $X$ be a Banach space with a partial order $\preceq $
and let $\tau $ be a Hausdorff topology on $X$ such that the order intervals
are $\tau $-closed. Suppose $C$ is a (nonempty) $\tau $-compact subset of $X$
and $\mathcal{F}$ a (nonempty) commutative family of monotone maps from $C$
into $C$. Then $\mathcal{F}$ has a common fixed point if and only if there
exists $c\in C$ such that $c\preceq f(c)$ for every $f\in \mathcal{F}$.
Moreover, the set of common fixed points of $\mathcal{F}$ has a maximal
element.
\end{corollary}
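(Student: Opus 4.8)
The plan is to deduce the corollary directly from Theorem~\ref{main2} by taking the ambient topological space to be $C$ itself, equipped with the relative topology $\tau|_C$ and with the order $\preceq$ restricted to $C$. So the first step is to check that this data meets the hypotheses of Theorem~\ref{main2}. The relative topology of a Hausdorff topology is Hausdorff, so $(C,\tau|_C)$ is a Hausdorff topological space. For $a,b\in C$, the order intervals of the partially ordered space $C$ are precisely $[a,\rightarrow)\cap C$ and $[a,b]\cap C$, where $[a,\rightarrow)$ and $[a,b]$ denote the corresponding order intervals of $X$; here I use the paper's convention that the only order intervals under consideration are the sets $[a,\rightarrow)$ and $[a,b]$. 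By hypothesis these are $\tau$-closed in $X$, hence $[a,\rightarrow)\cap C$ and $[a,b]\cap C$ are $\tau|_C$-closed subsets of $C$, and since $C$ is $\tau$-compact a closed subset of it is $\tau|_C$-compact. Thus the order intervals of $C$ are compact and Theorem~\ref{main2} is applicable to $C$.

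The second step is the equivalence. The implication ``$\mathcal{F}$ has a common fixed point $\Rightarrow$ there exists $c\in C$ with $c\preceq f(c)$ for every $f\in\mathcal{F}$'' is immediate: a common fixed point $c$ satisfies $f(c)=c$, hence $c\preceq f(c)$ by reflexivity of $\preceq$. For the converse, if such a $c$ exists then $\mathcal{F}$, regarded as a nonempty commutative family of monotone self-maps of the topological partially ordered space $C$ whose order intervals are compact, satisfies all the hypotheses of Theorem~\ref{main2}; that theorem then gives that the set of common fixed points of $\mathcal{F}$ is nonempty and has a maximal element. This yields the stated equivalence and, at the same time, the ``moreover'' clause.

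I do not anticipate a genuine obstacle here, since the substance of the corollary is already contained in Theorem~\ref{main2} (and ultimately in Lemma~\ref{compact}); the only points that need care are the passage from ``$\tau$-closed in $X$'' to ``$\tau|_C$-compact in $C$'' for order intervals, and the fact that $C$ need not be all of $X$. It is also worth noting that the Banach space structure is not used at all in the argument: the statement is phrased for a Banach space only because that is the setting of the intended applications and of the comparison with the earlier results discussed immediately after the corollary.
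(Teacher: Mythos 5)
Your proposal is correct and is exactly the derivation the paper intends (the paper states the corollary without proof, as an immediate consequence of Theorem~\ref{main2}): restrict to $C$ with the relative topology, observe that the order intervals of $C$ are $\tau$-closed subsets of the $\tau$-compact set $C$ and hence compact, apply Theorem~\ref{main2}, and note that the forward implication is trivial by reflexivity. Your closing remark that the Banach space structure plays no role is also accurate.
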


Let us list the improvements of Corollary \ref{main3} over known results
till date:

\begin{itemize}
\item We do not impose any conditions on a Banach space $X$.

\item We consider an arbitrary Hausdorff topology $\tau $ while in most
papers the classical weak topology is considered or at least a topology for
which a Banach space $X$ satisfies a rather strong geometrical assumption
(the $\tau $-Opial condition).

\item A subset $C$ of $X$ is $\tau $-compact and in general need be neither
convex nor bounded.

\item We assume that $T:C\rightarrow C$ is monotone only and need be neither
monotone-nonexpansive nor continuous.

\item Rather than considering a single mapping $T$, we consider any family
of commuting monotone mappings.

\item And finally, apart from the existential result we obtain a qualitative
information about the set of fixed points that is sometimes helpful in
applications.
\end{itemize}

We conclude this section with a few special cases of Corollary \ref{main3}.

\begin{theorem}[{see {\protect\cite[Theorem 2.1]{BaKh1}}}]
Let $X$ be a Banach space. Let $\tau $ be a topology on $X$ such that $X$
satisfies the $\tau $-Opial condition. Let $\preceq $ be a partial order on $%
X$ such that order intervals are convex and $\tau $-closed. Let $C$ be a
bounded convex $\tau $-compact nonempty subset of $X$ and let $%
T:C\rightarrow C$ be a monotone nonexpansive mapping. Assume that there
exists $x_{0}\in C$ such that $x_{0}$ and $T(x_{0})$ are comparable. Then $T$
has a fixed point.
\end{theorem}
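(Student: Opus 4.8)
The plan is to obtain this theorem as a direct specialization of Corollary \ref{main3}, the point being that the $\tau$-Opial condition, the convexity of the order intervals, the boundedness and convexity of $C$, and even the nonexpansiveness of $T$ are all irrelevant to the conclusion. Indeed, a monotone nonexpansive self-map of $C$ is in particular a monotone self-map of $C$, and $\{T\}$ is a (trivially commutative) nonempty family of such maps; a $\tau$-compact $C$ together with $\tau$-closed order intervals is exactly what Corollary \ref{main3} requires (in the setting of \cite{BaKh1} the topology $\tau$ is Hausdorff, as demanded there).

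First I would dispose of the hypothesis on $x_{0}$. If $x_{0}\preceq T(x_{0})$, set $\mathcal{F}=\{T\}$ and $c=x_{0}$; Corollary \ref{main3} then yields some $x\in C$ with $T(x)=x$, and in fact a \emph{maximal} fixed point. If instead $T(x_{0})\preceq x_{0}$, pass to the reverse order $\preceq^{\ast}$ given by $x\preceq^{\ast}y\iff y\preceq x$: the order intervals for $\preceq^{\ast}$ are, up to interchanging endpoints, the order intervals for $\preceq$, hence still $\tau$-closed (and convex); $T$ is again monotone for $\preceq^{\ast}$; and now $x_{0}\preceq^{\ast}T(x_{0})$, so the previous case applies. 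Equivalently, one may simply invoke the obvious dual form of Corollary \ref{main3}, with ``$c\preceq f(c)$'' replaced by ``$f(c)\preceq c$'', which holds whenever the ``downward'' order intervals are $\tau$-closed as well --- as they are here.

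I do not anticipate any genuine obstacle: once Lemma \ref{compact} and the Knaster--Tarski theorem are available, this statement carries no additional content, which is precisely the thesis of the present note. The only mild care needed is the bookkeeping for the reverse-order reduction and the verification that the (rather strong) hypotheses of \cite[Theorem 2.1]{BaKh1} do imply the (much weaker) hypotheses of Corollary \ref{main3}; neither step involves any metric or topological subtlety beyond what has already been recorded above.
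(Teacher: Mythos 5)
Your derivation is correct and is exactly the route the paper intends: this theorem is stated there without proof as a direct specialization of Corollary \ref{main3} (take $\mathcal{F}=\{T\}$, $c=x_{0}$, discard the Opial, convexity, boundedness and nonexpansiveness hypotheses as superfluous). Your extra care with the case $T(x_{0})\preceq x_{0}$ via the reversed order, using that the ``downward'' intervals $(\leftarrow,b]$ are also $\tau$-closed in the setting of \cite{BaKh1}, is a worthwhile detail the paper leaves implicit.
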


\begin{theorem}[{\protect\cite[Theorem 4.1]{BiKh}}]
Let $(X,\left\Vert \cdot \right\Vert ,\preceq )$ be a partially ordered
Banach space such that order intervals are closed and convex. Assume $X$ is
uniformly convex in every direction. Let $C$ be a nonempty weakly compact
convex subset of $X$ and let $T:C\rightarrow C$ be a monotone nonexpansive
mapping. Assume there exists $x_{0}\in C$ such that $x_{0}$ and $T(x_{0})$
are comparable. Then $T$ has a fixed point.
\end{theorem}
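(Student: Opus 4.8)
The plan is to obtain this statement as an immediate instance of Corollary~\ref{main3}, observing along the way that neither the uniform convexity in every direction, nor the nonexpansiveness of $T$, nor the convexity of $C$ is actually needed. First I would take $\tau$ to be the weak topology on $X$, which is Hausdorff. The standing hypothesis that the order intervals (the sets $[a,b]$, $[a,\rightarrow)$ and $(\leftarrow,b]$) are norm-closed and convex implies, since a norm-closed convex subset of a Banach space is weakly closed, that all order intervals are $\tau$-closed. Together with the weak compactness of $C$, this places us exactly in the hypotheses of Corollary~\ref{main3}, with $\mathcal{F}=\{T\}$, a trivially commutative family of monotone self-maps of $C$.

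It then remains only to produce $c\in C$ with $c\preceq T(c)$. The assumption provides a point $x_{0}$ with $x_{0}$ and $T(x_{0})$ comparable, so I would split into two cases. If $x_{0}\preceq T(x_{0})$, take $c=x_{0}$ and quote Corollary~\ref{main3}: the fixed point set of $T$ is nonempty and has a maximal element, a fortiori $T$ has a fixed point. If instead $T(x_{0})\preceq x_{0}$, I would pass to the reversed order $\succeq$ on $X$; its order intervals are $[a,b]_{\succeq}=[b,a]_{\preceq}$ and $[a,\rightarrow)_{\succeq}=(\leftarrow,a]_{\preceq}$, still norm-closed and convex, hence $\tau$-closed; $T$ is $\succeq$-monotone; and now $x_{0}\succeq T(x_{0})$. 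Applying Corollary~\ref{main3} to $(X,\succeq)$ with $c=x_{0}$ again yields a fixed point.

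I do not expect a genuine obstacle here, since everything is subsumed by the Corollary; the only two points deserving a word of care are the passage from norm-closed-and-convex to weakly closed order intervals, and the fact that ``comparable'' forces the two-case argument, the downward case being dispatched by the symmetry of the hypotheses under reversal of the order. It is worth underlining the outcome: the conclusion in fact holds for an arbitrary monotone (not necessarily nonexpansive, not necessarily continuous) self-map of a weakly compact subset of an \emph{arbitrary} Banach space, and even for any commuting family of such maps, with the set of common fixed points admitting a maximal element.
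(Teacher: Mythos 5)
Your proposal is correct and is exactly the route the paper intends: the theorem is listed as a special case of Corollary~\ref{main3}, obtained by taking $\tau$ to be the weak topology (norm-closed convex order intervals being weakly closed) and $\mathcal{F}=\{T\}$. Your two-case treatment of the comparability hypothesis via the reversed order is the right way to bridge the gap between ``$x_0$ and $T(x_0)$ comparable'' and the corollary's one-sided condition $c\preceq f(c)$, and your closing observation that uniform convexity in every direction, nonexpansiveness, and convexity of $C$ are all superfluous is precisely the point the authors are making.
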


\begin{theorem}[{\protect\cite[Theorem 3.3]{AlKh}}]
Let $(X,\left\Vert \cdot \right\Vert ,\preceq )$ be a partially ordered
Banach space for which order intervals are convex and closed. Assume $X$ is
uniformly convex. Let $C$ be a nonempty convex closed bounded subset of $X$
and let $T:C\rightarrow C$ be a continuous monotone asymptotically
nonexpansive mapping. Then $T$ has a fixed point if and only if there exists
$x_{0}\in C$ such that $x_{0}$ and $T(x_{0})$ are comparable.
\end{theorem}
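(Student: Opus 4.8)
The plan is to read this statement off from Corollary \ref{main3}, taking $\tau$ to be the weak topology of $X$. First I would record the two standard facts that make the structural hypotheses fit: a uniformly convex Banach space is reflexive (Milman--Pettis), so the bounded, closed, convex set $C$ is weakly compact; and a convex norm-closed set is weakly closed (Mazur), so the order intervals of $X$ are weakly closed. Since the weak topology is Hausdorff, $X$, $\tau$ and $C$ then satisfy all the structural hypotheses of Corollary \ref{main3}.

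The ``only if'' implication is immediate, since a fixed point $p$ of $T$ satisfies $p\preceq p=T(p)$ and is therefore comparable with $T(p)$. For the converse, let $x_{0}\in C$ be comparable with $T(x_{0})$. I would first reduce to the case $x_{0}\preceq T(x_{0})$: if instead $T(x_{0})\preceq x_{0}$, replace $\preceq$ throughout by the reversed order $\succeq$, for which $T$ is again monotone, $x_{0}\succeq T(x_{0})$, and the order intervals are the original sets $(\leftarrow ,a]$ and $[b,a]$ --- still convex and norm-closed, hence weakly closed. (This is the only place where closedness of the \emph{lower} order intervals $(\leftarrow ,b]$ is used.)

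It then remains to apply Corollary \ref{main3} to the singleton commutative family $\mathcal{F}=\{T\}$ of monotone self-maps of $C$, with $c=x_{0}$: from $c\preceq T(c)$ we obtain a common fixed point of $\mathcal{F}$, that is, a fixed point of $T$; moreover the set of fixed points has a maximal element, slightly more than is asserted.

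There is no genuine obstacle in this argument. The only step that calls for a moment's care is the reduction to $x_{0}\preceq T(x_{0})$; beyond that, the point worth stressing --- and the whole moral of the section --- is that neither the continuity nor the asymptotic nonexpansiveness of $T$ is used anywhere, and that uniform convexity enters solely through reflexivity, i.e.\ through weak compactness of $C$.
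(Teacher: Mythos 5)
Your proof is correct and is exactly the deduction the paper intends: the theorem is listed as a special case of Corollary \ref{main3}, obtained by taking $\tau$ to be the weak topology, using Milman--Pettis for weak compactness of $C$ and Mazur's theorem for weak closedness of the order intervals, and discarding the continuity and asymptotic nonexpansiveness hypotheses. Your explicit handling of the case $T(x_{0})\preceq x_{0}$ by reversing the order (which requires closedness of the intervals $(\leftarrow,b]$, beyond the paper's restricted convention for order intervals) is a detail the paper leaves implicit, and it is handled correctly.
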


\begin{theorem}[{\protect\cite[Theorem 3.6]{AlKh}}]
Let $(X,\left\Vert \cdot \right\Vert ,\preceq )$ be a partially ordered
Banach space for which order intervals are convex and closed. Assume $X$ is
uniformly convex space that satisfies the monotone weak-Opial condition. Let
$C$ be a nonempty convex closed bounded subset of $X$ and let $%
T:C\rightarrow C$ be monotone asymptotically nonexpansive mapping. Then $T$
has a fixed point if and only if there exists $x_{0}\in C$ such that $x_{0}$
and $T(x_{0})$ are comparable.
\end{theorem}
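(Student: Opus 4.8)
The plan is to obtain this theorem as a routine specialization of Corollary~\ref{main3}, and in doing so to observe that the hypotheses of uniform convexity, asymptotic nonexpansiveness and the monotone weak-Opial condition are all superfluous: only monotonicity of $T$ and weak compactness of $C$ are actually used. First I would assemble the data needed to invoke Corollary~\ref{main3}. Let $\tau$ be the weak topology of $X$; it is Hausdorff. Since $X$ is uniformly convex it is reflexive (Milman--Pettis), so the nonempty closed bounded convex set $C$ is weakly compact. Each order interval, being convex and norm-closed, is $\tau$-closed. Finally $T$ is monotone, so the singleton family $\mathcal{F}=\{T\}$ is a commutative family of monotone self-maps of $C$. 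All the assumptions of Corollary~\ref{main3} are thus in place.

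Next I would deal with the two implications. The direct implication is trivial: if $Tx=x$ for some $x\in C$, then $x$ and $Tx$ are comparable (in fact equal). For the converse, let $x_0\in C$ be such that $x_0$ and $Tx_0$ are comparable. If $x_0\preceq Tx_0$, then Corollary~\ref{main3} applied with $c=x_0$ produces a fixed point of $T$ at once. If instead $Tx_0\preceq x_0$, I would replace $\preceq$ by the reverse order $\preceq'$, given by $x\preceq' y\iff y\preceq x$. Then $T$ remains monotone (now for $\preceq'$), the order intervals for $\preceq'$ coincide, after interchanging their endpoints, with those for $\preceq$, hence remain convex and closed, and $x_0\preceq' Tx_0$; so Corollary~\ref{main3} applies in this case as well.

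The one point that deserves care, and the only place where the argument is sensitive to the conventions of \cite{AlKh}, is this reversal of the order: to quote Corollary~\ref{main3} verbatim one needs the sets $(\leftarrow,b]=\{x\in X:x\preceq b\}$ to be $\tau$-closed, since these are exactly the rays $[b,\rightarrow)$ for $\preceq'$. This is automatic in the standard framework of \cite{AlKh}, where $\preceq$ is induced by a closed convex cone and hence \emph{all} order intervals --- of the forms $[a,b]$, $[a,\rightarrow)$ and $(\leftarrow,b]$ --- are convex and closed. Granting this, no further work is required; in particular nothing about asymptotic nonexpansiveness or about any Opial-type condition enters the proof.
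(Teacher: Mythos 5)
Your proposal is correct and is exactly the route the paper intends: the theorem is listed as one of several special cases of Corollary~\ref{main3}, obtained by taking $\tau$ to be the weak topology (with weak compactness of $C$ from reflexivity via Milman--Pettis and weak closedness of the convex, norm-closed order intervals via Mazur) and $\mathcal{F}=\{T\}$. The paper gives no written proof, so your explicit handling of the case $T(x_0)\preceq x_0$ by reversing the order --- together with the observation that this needs the sets $(\leftarrow,b]$ to be closed, which holds in the cone-induced setting of \cite{AlKh} --- supplies a detail the paper silently glosses over, and your remark that uniform convexity, asymptotic nonexpansiveness and the monotone weak-Opial condition play no role is precisely the point the authors are making.
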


\section{Examples of application}

Weak conditions assumed in the results obtained in the previous section
allow us to weaken conditions on examples of applications of monotonicity
results. One of these examples is provided, for instance, by \cite[Section 3]%
{BaKh1}.

Let $\Omega$ be a measure space, and consider the integral equation
\begin{equation}  \label{ie1}
x(t)=g(t)+\int_\Omega F(t,s,x(s))\; d\mu (s),\;\;\; t\in \Omega,
\end{equation}
where

\begin{itemize}
\item[i)] $g\in L^2(\Omega, {})$,

\item[ii)] $F\colon \Omega\times \Omega \times L^2(\Omega, {})\to {}$ is
measurable and monotone in its third coordinate.

\item[iii)] There exists a non-negative function $h(\cdot,\cdot)\in
L^2(\Omega\times \Omega)$ and $M\in [0,1/2)$ such that
\begin{equation*}
|F(t,s,x)|\le h(t,s)+M|x|,
\end{equation*}
where $t,s\in \Omega$ and $x\in L^2(\Omega, {})$.
\end{itemize}

As it is shown in \cite[Section 3]{BaKh1}, we can associate to this integral
equation the operator $J\colon L^2(\Omega, {})\to L^2(\Omega, {})$ given by
\begin{equation*}
(Jy)(t)=g(t)+\int_\Omega \tilde{F}(t)(y)(s)\; d\mu (s)
\end{equation*}
where $\tilde{F}(t)(y)(s) = F(t,s,y(s))$. Then it can be shown, in the same
terms as in \cite{BaKh1}, that $J$ sends the whole $L^2(\Omega, {})$ to a
closed ball of sufficiently large radius. Now, if we consider the weak
topology as $\tau$ in Theorem \ref{main2} it can be shown that order
intervals are closed with respect to this topology. Finally we need to
guarantee that there exists a $c\in L^2(\Omega, {})$ which is comparable
with $J(c)$. This will be furnished by the extra condition in the next
theorem.

\begin{theorem}
Under the above assumptions, we have that:

\begin{itemize}
\item[i)] The integral equation (\ref{ie1}) has a non-negative solution
provided we assume that $g(t)+\int_\Omega F(t,s,0)\;d\mu (s)\geq 0$ for
almost every $t\in \Omega$.

\item[ii)] The integral equation (\ref{ie1}) has a non-positive solution
provided we assume that $g(t)+\int_\Omega F(t,s,0)\;d\mu (s)\le 0$ for
almost every $t\in \Omega$.
\end{itemize}
\end{theorem}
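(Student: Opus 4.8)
The plan is to verify the hypotheses of Corollary \ref{main3} (equivalently Theorem \ref{main2}) for the operator $J$ acting on a suitable $\tau$-compact set, and then to produce the starting point $c$ comparable with $J(c)$ using the sign condition. First I would fix $\tau$ to be the weak topology on $L^2(\Omega)$ and recall, as indicated before the statement, that with the natural (pointwise a.e.) order on $L^2(\Omega)$ the order intervals $[a,b]$ and $[a,\rightarrow)$ are convex and norm-closed, hence weakly closed, so the abstract compactness-of-order-intervals condition is met once we restrict to a weakly compact set. Next I would invoke the estimate in (iii): since $\abs{F(t,s,x)}\le h(t,s)+M\abs{x}$ with $M\in[0,1/2)$ and $h\in L^2(\Omega\times\Omega)$, a Minkowski-type computation shows $\|Jy\|_2 \le \|g\|_2 + \|h\|_2 + M\|y\|_2$ (using $\mu(\Omega)<\infty$ or the Hilbert-Schmidt bound, exactly as in \cite{BaKh1}), so $J$ maps the closed ball $C = \bar B(0,R)$ into itself for $R = (\|g\|_2+\|h\|_2)/(1-M)$. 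This ball $C$ is convex, bounded and weakly compact (reflexivity of $L^2$), so $C$ with the weak topology satisfies the hypotheses on the ambient space in Corollary \ref{main3}, and $\mathcal{F}=\{J\}$ is a trivially commutative family of monotone maps $C\to C$ (monotonicity of $J$ following from monotonicity of $F$ in its third variable together with positivity of the integral).

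For part (i), the remaining point is to exhibit $c\in C$ with $c\preceq J(c)$. The natural candidate is $c=0$: then $(J0)(t) = g(t) + \int_\Omega F(t,s,0)\,d\mu(s)$, and the assumed inequality $g(t)+\int_\Omega F(t,s,0)\,d\mu(s)\ge 0$ for a.e.\ $t$ says precisely that $0 = c \preceq J(c)$ in the a.e.\ order, and $J0\in C$ by the ball-invariance above. Corollary \ref{main3} then yields a fixed point $x^\ast = Jx^\ast$, i.e.\ a solution of (\ref{ie1}); moreover $x^\ast\succeq 0$ because $x^\ast$ lies in $[0,\rightarrow)$: indeed, $0\preceq J0\preceq J x^\ast = x^\ast$ once we know $0\preceq x^\ast$, but more directly the fixed point furnished by the proof of Theorem \ref{main2} is the supremum of the orbit $J_0=\{J^n 0: n\ge 1\}\cup\{0\}$, which is an increasing sequence starting at $0$, hence $x^\ast \ge 0$. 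So $x^\ast$ is the desired non-negative solution.

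Part (ii) is entirely symmetric: replace $0\preceq J0$ by $J0\preceq 0$, use the order interval $(\leftarrow,0]$ in place of $[0,\rightarrow)$, and apply Corollary \ref{main3} (whose statement is symmetric under reversing the order, or apply the theorem to the reversed partial order $\succeq$, under which $J$ is still monotone and the order intervals are still weakly compact). The orbit $\{J^n 0\}$ is then decreasing and bounded below (it stays in $C$), its infimum $x^\ast$ is a fixed point, and $x^\ast\preceq 0$, giving a non-positive solution.

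I expect the only genuine technical obstacle to be the verification that $J$ maps $C$ into $C$ and that the relevant order intervals are weakly closed — both of which are handled essentially verbatim as in \cite[Section 3]{BaKh1}; once these are in place, the fixed point is immediate from Corollary \ref{main3} and the sign hypothesis is tailored exactly to provide the comparability $c\preceq J(c)$ (resp.\ $J(c)\preceq c$) with $c=0$. No convexity of $J$, no nonexpansiveness, and no Opial-type condition is needed, which is the whole point of the stronger abstract result.
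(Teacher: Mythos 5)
Your proposal is correct and follows essentially the same route as the paper: the paper's proof simply observes that the sign hypothesis gives $0\preceq J(0)$ (resp.\ $J(0)\preceq 0$) and invokes Theorem \ref{main1} (the single-map case of your Corollary \ref{main3}), with the ball-invariance of $J$ and the weak closedness of order intervals delegated, as in your write-up, to the discussion preceding the statement and to \cite{BaKh1}. Your extra care in locating the fixed point above $c=0$ (it is the supremum of a directed set containing $0$, hence non-negative) is a detail the paper leaves implicit but is exactly the right justification.
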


\begin{proof}
Notice that the added condition implies that $J(0)\geq 0$ in i) and $J(0)\le
0$ in ii). The conclusion follows then after Theorem \ref{main1}.
\end{proof}

\begin{remark}
Comparing this example with the one provided in \cite[Section 3]{BaKh1} we
can notice that no nonexpansive condition is imposed on $F$ and this allows
us to weaken the conditions on the measure space which is no longer required
to be finite. In fact, our approach does not even require the measure space
to be $\sigma$-finite as it is the case for some other close examples to
this one in the literature as the one developed in \cite[Section 7.2.2]{CaHe}%
.
\end{remark}

\section{Acknowledgements}

R. Esp\'{\i}nola has been partially supported by DGES
(MTM2015-65242-C2-1-P). This work was developed while A. Wi\'{s}nicki was
visiting the University of Seville in the spring of 2017. He wishes to thank
the Department of Mathematical Analysis and IMUS (Instituto de
Investigaciones Matem\'{a}ticas de la Universidad de Sevilla) for
hospitality. The authors would like to thank the referee and, especially,
the Editor Professor Stanis\l aw Kwapie\'{n} for insightful comments on the
manuscript, providing us with an elegant proof of Theorem \ref{main2} and
suggestions for the preparation of the final version of this work.

\end{document}